
\documentclass[a4paper,11pt]{article}         

%
%
\usepackage{graphicx}
%
%
\usepackage{amsmath}  
\usepackage{amsthm}
\usepackage{amsfonts}
\usepackage{lmodern}
\usepackage{graphicx}
\usepackage{array} 
\usepackage{enumerate} 
\usepackage[ansinew]{inputenc} 
\usepackage[T1]{fontenc} 
\usepackage{url}
\usepackage[usenames]{xcolor}
\usepackage{lscape}
\usepackage{framed}\colorlet{shadecolor}{black!25!white} 
\setlength{\paperheight}{29.7cm}
\setlength{\paperwidth}{21cm}
\setlength{\topmargin}{0cm}
\setlength{\topskip}{0cm}
\setlength{\headheight}{0cm}
\setlength{\headsep}{0cm}
\setlength{\textwidth}{15.92cm}  
\setlength{\textheight}{24.62cm}  
\setlength{\oddsidemargin}{0cm}
\setlength{\evensidemargin}{0cm}
\pdfpageheight=29.7cm
\pdfpagewidth=21cm
\setlength{\parindent}{0cm}
\setlength{\parskip}{2mm plus 1mm minus 1mm}
\baselineskip=1.1\baselineskip 
  \setlength{\topmargin}{-30pt}
  \setlength{\topskip}{0pt}
  \setlength{\headheight}{12pt}
  \setlength{\headsep}{30pt}
\setlength{\footskip}{30pt} 

\usepackage{fancyhdr}
\pagestyle{fancy} 
\lhead{Alabert-Berti-Caballero-Ferrante} 
\chead{} 
\rhead{\textit{NFL Theorems in the continuum}} 
\lfoot{}
\cfoot{}
\rfoot{\textbf{\thepage}}
\fancyheadoffset{30pt} 
\fancyfootoffset{30pt} 




\newcommand\Comment[1]{{\color{blue}#1}} 
\newcommand\unio{\mathop\cup\limits}
\newcommand{\Reals}{\mathbb{R}}  
\newcommand{\Naturals}{\mathbb{N}}  
\newcommand{\Rationals}{\mathbb{Q}}

\newcommand{\E}{\operatorname{E}}
\newcommand{\Cov}{\operatorname{Cov}}
\newcommand{\Var}{\operatorname{Var}}
\newcommand{\ind}{\mathbf{1}}
\newcommand{\condprob}[2]{\raise2pt
                   \hbox{%
                   \mathsurround=0pt$#1$}
                    \ \raise-1pt\hbox{\scalebox{1.2}[1.5]{/}}\ 
                   \raise-2pt
                   \hbox{%
                   \mathsurround=0pt$#2$}
                    }

\newtheorem{theorem}{Theorem}[section]
\newtheorem{dfn}[theorem]{Definition}
\newtheorem{lemma}[theorem]{Lemma}
\newtheorem{proposition}[theorem]{Proposition}
\newtheorem{remark}[theorem]{Remark}

\newtheorem{examples}[theorem]{Examples}
\newtheorem{corollary}[theorem]{Corollary}

\newlength\savedwidth

\begin{document}  

\title{No-Free-Lunch Theorems in the Continuum
}
\author{Aureli Alabert \\ 
           Department of Mathematics \\
           Universitat Aut\`onoma de Barcelona \\  
           08193 Bellaterra, Catalonia \\  
           \url{Aureli.Alabert@uab.cat}  
           \and
           Alessandro Berti \\ 
           Dipartimento di Matematica \\
           Universit\`a di Padova \\  
           35121 Padova, Italy \\  
           \url{berti@math.unipd.it} 
           \and
           Ricard Caballero \\ 
           Department of Mathematics \\
           Universitat Aut\`onoma de Barcelona \\  
           08193 Bellaterra, Catalonia \\  
           \url{rcaballero@mat.uab.cat}  
           \and
           Marco Ferrante \\ 
           Dipartimento di Matematica \\
           Universit\`a di Padova \\  
           35121 Padova, Italy \\  
           \url{ferrante@math.unipd.it}
}

\date{\today}
\maketitle
\thispagestyle{empty}
\begin{abstract}  
 No-Free-Lunch Theorems state, roughly speaking, that the performance of all search algorithms is the
 same when averaged over all possible objective functions. This fact was precisely formulated for the first
 time in a now famous paper by Wolpert and Macready, and then subsequently refined and extended by several
 authors, always in the context of a set of functions with discrete domain and codomain. Recently, Auger  
 and Teytaud have shown that for continuum domains there is typically no No-Free-Lunch theorems. In this paper
 we provide another approach, which is simpler, requires less assumptions, relates the discrete and 
 continuum cases,
 and that we believe that clarifies 
 the role of the cardinality and structure of the domain. 
\par
\medskip
\textbf{Keywords:} No-Free-Lunch, stochastic processes, black-box optimisation.
\par
\textbf{Mathematics Subject Classification (2010):} 68Q25 (60G, 90C26) 
\end{abstract}

\section{Introduction}\label{sec:Intro}
In \cite{Wolpert97nofree}, Wolpert and Macready formulated rigorously a principle which was already 
intuitively known to the operations research practitioners: All search or optimization algorithms perform equally 
well when their performance is averaged against all possible objective functions. This principle has been
known since then as the \emph{No-Free-Lunch Theorem} (NFL for short). 

The precise formulation of the Wolpert-Macready NFL Theorem will be stated in Section \ref{sec:Prelim} (Theorem \ref{thm:WMNFL}), 
but the basic assumptions are that we are dealing with the set of all functions $f\colon{\cal X}\rightarrow {\cal Y}$ between two finite sets
${\cal X}$ and ${\cal Y}$, and that the ``averaging'' is uniform over all these functions. The measure of  
performance can be any function of the images $f(x_1),\dots, f(x_m)$ of the points $x_1, \dots, x_m$ 
sampled by the algorithm.

In \cite{Schumacher01theno}, Schumacher, Vose and Whitley extended the result to some subsets of all functions
(those called ``closed under permutation''), whereas Igel and Toussaint \cite{1079.90111} stated it 
for some non-uniform measures.
The language of probability theory allows to formulate these
statements  
in a unified and easier way and, as we will see, it is absolutely necessary to switch from the discrete
(finite) setting to the continuum.
In \cite{MR2581081}, Auger and Teytaud considered for the first time this case, 
and their result is essentially negative: No NFL theorems exist in the continuum.

Our goal in this paper is to improve and clarify the results of Auger and Teytaud, particularly 
Theorem 4.1, \cite{MR2581081}.
First of all, we show situations where NFL theorem do exist. This apparent paradox is resolved
by noticing that the hypotheses imposed in  \cite{MR2581081} invalidate our examples. In fact, 
the authors  seem to specifically look for conditions under which no NFL theorem can hold true. 
The theorem is indeed correct, although there is a gap in the proof, as explained in Section \ref{sec:Main}.
We must also point out, however, that their paper contains much more 
material of interest on this and other matters. 

The point of view adopted here is different: We establish a simple and natural definition
of  the NFL property and look for the necessary conditions implied by this definition.
In this sense, our main result is Theorem \ref{thm:NFLisConst}. The conclusion we reach is that 
there are no No-Free-Lunch theorems for
functions whose domain is the real number system, except for a few extreme cases.
Admittedly, the theorem contains an additional mild technical condition (the existence of
second-order moments) that we have not yet been able to remove.

The relevance of this theoretical discussion for the field of global \emph{black-box} optimisation 
comes from the so-called \emph{probabilistic models}:  In many practical optimisation problems there 
is little information about the objective function, with no access to derivatives or to any explicit formula;
we are only allowed to ask the function for its value at a point of our choice and, after observing the 
value returned, we may decide on the next point to
sample the function; and so on. Moreover, function evaluations can be expensive, 
and we are constrained to make only a small number of them.
In these cases, it may be useful to think that the function has been
drawn at random from some set of functions, according to some probability law (perhaps with some
unknown parameters) that one specifies using prior information. Technically, we are then 
in the presence of a \emph{stochastic process}, from
which our function is a particular path. 
Different algorithms will choose different points for the successive evaluations, and some may perform
better than others by exploiting better the model, \emph{unless} there is a No-Free-Lunch
theorem for that model. If this is the case, all algorithms perform the same and, in particular, pure blind search
is as good as any other proposal. In the present paper we will see that the presence of the No-Free-Lunch 
property reduces to a few probabilistic models, which are not
really important in practice.

The paper is organised as follows: In Section \ref{sec:Prelim} we state 
the definitions and preliminaries both from algorithmics and from probability theory that are strictly
needed in the rest of the paper. In Section \ref{sec:Main} we state the main results: We show that 
No-Free-Lunch cases do exist in the continuum; we impose then a hypothesis of measurability of the 
stochastic process involved, and we see that NFL can only appear if we are essentially in a discrete 
setting (Theorem \ref{thm:MesInd}),
or the model consists of a trivial constant process (Theorem \ref{thm:NFLisConst}). In Section
\ref{sec:Conc} we justify the investigation of the existence (or not) of NFL properties in the continuum
and propose some open questions.

\section{Preliminaries}\label{sec:Prelim}
We follow approximately the notations of \cite{1079.90111} and \cite{MR2581081}, with some convenient 
modifications.

\subsection{Algorithmic concepts}

Let ${\cal X}$ and ${\cal Y}$ be any two sets. The set of all functions $f\colon{\cal X}\rightarrow{\cal Y}$ can be
identified with the Cartesian product ${\cal Y}^{\cal X}$. Denote
\begin{equation}
{\cal E}_0:=\{\emptyset\},\ 
{\cal E}_1:={\cal X}\times {\cal Y},\ 
\dots,\ 
{\cal E}_m={\cal X}^m\times {\cal Y}^m
\end{equation}
and ${\cal E}:=\unio_{m\ge 0}{\cal E}_m$.

A \emph{(random) algorithm} $A$ is a mapping $A\colon {\cal Y}^{\cal X}\times{\cal E}\times\Theta\rightarrow {\cal E}$, 
where $(\Theta,{\cal G}, Q)$ is a probability space and, 
if $e=((x_1,y_1),\dots,(x_m,y_m))\in{\cal E}_m$, then 
$A(f, e,\theta)\in{\cal E}_{m+1}$ and 
\begin{equation}
A(f, e,\theta)=((x_1,y_1),\dots,(x_m,y_m),(x_{m+1},y_{m+1}))
\end{equation}
with $y_{m+1}=f(x_{m+1})$. Therefore, we can think of  $A(f, e):=(X_1,Y_1),\dots,(X_m,Y_m))$ 
as a random vector $\Theta\rightarrow {\cal E}$.
This definition formalises the fact that the algorithm chooses the next point based on the 
previous points and an (optional) random mechanism represented by the probability
space $(\Theta,{\cal G}, Q)$.  
One may assume that $f$ is never evaluated more than once at the same point.

A \emph{measure of performance} of the algorithm is any function $C$ of the values 
obtained by evaluating $f$ during the algorithm. Formally,
\begin{equation}
C\colon\unio_{m\ge 1} {\cal Y}^m\rightarrow \Reals 
\ .
\end{equation}
A typical measure of performance for optimization problems is the function
$C(y_1,\dots,y_m)=\min\{y_1,\dots,y_m\}$, the best observed value after $m$ evaluations. 
(Notice that the measures of performance we are talking
about are not related to algorithmic complexity, e.g. to the number of evaluations needed 
to reach the end of a procedure.)

To state the basic Wolpert-Macready Theorem, rephrased in our probability-theoretic language,
consider another probability space $(\Omega, {\cal F}, P)$, and a random variable
$f\colon\Omega\rightarrow {\cal Y}^{\cal X}$. Now $f$ is random and $f(\omega)$, for each $\omega$,
is a specific function ${\cal X}\rightarrow{\cal Y}$.
Denote by $A^m\colon{\cal Y}^{\cal X}\times\Theta\rightarrow{\cal E}_m$ the successive application, $m$ times, 
of algorithm $A$ to the initial empty 
sequence $\emptyset$, and by $A^m_Y\colon{\cal Y}^{\cal X}\times\Theta\rightarrow{\cal Y}^m$ its
second component. Finally, let us abbreviate 
$y:=(y_1,\dots,y_m)\in {\cal Y}^m$.

 The conditional probability 
 $Q\big\{\condprob{A_Y^m (h, \theta) = y }{ f(\omega)=h}\big\}$ is the probability 
 that the algorithm 
  $A$ produce the particular sequence of function values 
  $y:=(y_1,\dots,y_m)$ when applied to the function $h$. This probability is either
  0 or 1 for deterministic algorithms. 

\begin{theorem}\label{thm:WMNFL}
  \emph{(Wolpert-Macready \cite{Wolpert97nofree}, Theorem 1)}. \\
  Assume that $\cal X$ and $\cal Y$ are finite sets. Let $f\colon \Omega \rightarrow {\cal Y}^{\cal X}$ be
  a random variable that chooses functions $f(\omega)\in {\cal Y}^{\cal X}$ with the uniform discrete 
  probability law. That means, 
  for every $h\in {\cal Y}^{\cal X}$, 
  \begin{equation*}
  P\{\omega\in\Omega:\ f(\omega)=h\} =|{\cal Y}|^{-|{\cal X}|}
  \ ,
  \end{equation*}
  where $|\cdot|$ denotes cardinality.
    
  Then, the law of $A_Y^m$ is the same for all algorithms. Precisely stated: let $A$ and $B$ be two algorithms; 
  then, for all $m\in\Naturals$ and all $y\in{\cal Y}^m$, 
\begin{equation}\label{eq:W-M}
[P\times Q] \{(\omega,\theta):\ A_Y^m(f(\omega), \theta)=y\}
  = 
[P\times Q] \{(\omega,\theta):\ B_Y^m(f(\omega), \theta)=y\}
\ . 
\end{equation}
\end{theorem}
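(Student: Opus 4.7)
The plan is to prove the stronger statement that, under the uniform law on ${\cal Y}^{\cal X}$, the random vector $A_Y^m(f,\theta)$ is itself uniformly distributed on ${\cal Y}^m$, no matter which algorithm $A$ is used. Equation \eqref{eq:W-M} then follows at once, because both sides equal $|{\cal Y}|^{-m}$.

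The first reduction is to fix $\theta\in\Theta$ and treat $A(\cdot,\cdot,\theta)$ as a deterministic algorithm. If I can show that for each such $\theta$ the law (under $P$) of $A_Y^m(f,\theta)$ is uniform on ${\cal Y}^m$, then integrating against $Q$ on $\Theta$ via Fubini gives the same conclusion for the joint law under $P\times Q$, and in particular the identity between $A$ and $B$. So I now fix a deterministic strategy whose data is, at each step $k$, a function $\varphi_k\colon{\cal E}_{k-1}\to{\cal X}$ that picks the next query point from the history.

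The heart of the argument is an induction on $m$ showing that $(Y_1,\dots,Y_m)$ is uniform on ${\cal Y}^m$ and that $X_{m+1}=\varphi_{m+1}(X_1,Y_1,\dots,X_m,Y_m)$ lies outside $\{X_1,\dots,X_m\}$ (the no-revisit hypothesis). The base case is immediate: $X_1=\varphi_1(\emptyset)$ is a constant in ${\cal X}$ independent of $f$, and $f(X_1)$ is uniform on ${\cal Y}$ because $f$ is uniform on ${\cal Y}^{\cal X}$. For the inductive step, I would condition on the event $\{(X_1,Y_1,\dots,X_m,Y_m)=(x_1,y_1,\dots,x_m,y_m)\}$ for an arbitrary admissible history. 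On this event $X_{m+1}=\varphi_{m+1}(x_1,y_1,\dots,x_m,y_m)$ is a fixed point of ${\cal X}$, distinct from $x_1,\dots,x_m$ by assumption. The key counting observation is that the event we are conditioning on prescribes only the values of $f$ at the $m$ distinct points $x_1,\dots,x_m$; the remaining $|{\cal X}|-m$ coordinates of $f\in{\cal Y}^{\cal X}$ are still uniformly distributed on ${\cal Y}^{{\cal X}\setminus\{x_1,\dots,x_m\}}$ under the conditional law. In particular $f(X_{m+1})$ is uniform on ${\cal Y}$ and independent of the history, so $(Y_1,\dots,Y_{m+1})$ is uniform on ${\cal Y}^{m+1}$.

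The main obstacle is really one of bookkeeping rather than of analysis: one has to articulate cleanly the fact that the algorithmic choice $X_{m+1}$ depends on the random function $f$ only through the observed values $f(X_1),\dots,f(X_m)$, so that conditioning on those values ``freezes'' $X_{m+1}$ while leaving $f$ uniformly random on the complementary coordinates. Once this measurability/conditional-independence point is made precise, the finiteness of ${\cal X}$ and ${\cal Y}$ turns the whole computation into the elementary counting $|{\cal Y}|^{|{\cal X}|-m}/|{\cal Y}|^{|{\cal X}|}=|{\cal Y}|^{-m}$, and the theorem follows by summing over admissible histories and by Fubini in $\theta$.
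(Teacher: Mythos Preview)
Your argument is correct and is the standard inductive/counting proof of the Wolpert--Macready theorem: reduce to deterministic algorithms by fixing $\theta$, then show by induction that conditioning on an admissible history $(x_1,y_1,\dots,x_m,y_m)$ pins down $f$ only on $\{x_1,\dots,x_m\}$, leaving $f(x_{m+1})$ uniform on ${\cal Y}$, so that $(Y_1,\dots,Y_m)$ is uniform on ${\cal Y}^m$ regardless of the algorithm.

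There is nothing to compare against, however: the paper does not give its own proof of this theorem. It is stated as a preliminary result, with attribution to \cite{Wolpert97nofree}, and the displayed computations that follow it are not a proof but merely a rewriting of the conclusion~\eqref{eq:W-M} in equivalent forms --- first as the sum over $h\in{\cal Y}^{\cal X}$ in~\eqref{eq:W-M2}, and then, for deterministic algorithms, as an equality of cardinalities. Your proposal therefore supplies strictly more than the paper does at this point.
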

  Since the law of $f$ is uniform, 
\begin{align*}
  & 
  [P\times Q] \{(\omega,\theta):\ A_Y^m(f(\omega), \theta)=y\}
  \\ &=
  \sum_{h\in{\cal Y}^{\cal X}} 
  Q\big\{\condprob{\theta:\ A_Y^m(h, \theta)=y }{ f(\omega)=h}\big\}
  \cdot
  P\{\omega: f(\omega)=h\}
  \\ &=
  |{\cal Y}|^{-|{\cal X}|}
  \sum_{h\in{\cal Y}^{\cal X}} 
  Q\big\{\condprob{\theta:\ A_Y^m(h, \theta)=y }{ f(\omega)=h}\big\}
  \ .
\end{align*}  
Therefore, under the hypotheses of the theorem, (\ref{eq:W-M}) can be written

  \begin{equation}\label{eq:W-M2}
   \sum_{h\in{\cal Y}^{\cal X}} 
   Q\big\{\condprob{\theta:\ A_Y^m(h, \theta)=y }{ f(\omega)=h}\big\}
  = 
   \sum_{h\in{\cal Y}^{\cal X}} 
   Q\big\{\condprob{\theta:\ B_Y^m(h, \theta)=y }{ f(\omega)=h}\big\}
  \end{equation}
for any two algorithms $A$ and $B$, and for all $y\in{\cal Y}^m$, $m\in\Naturals$.

And still another, informal, way to formulate the result of Wolpert and Macready is that
if an algorithm performs better than pure blind random search in a 
particular set of functions, then 
it must perform worse than random search \emph{in the mean} on the 
complementary set.

For simplicity, we will assume that we deal with deterministic algorithms from now on, although 
everything can be easily extended to accommodate random algorithms. 
For deterministic algorithms, equality (\ref{eq:W-M2}) can be written

  \begin{equation*}
  \Big| \big\{h \in {\cal Y}^{\cal X}:\ A_Y^m(h)=y \big\}\Big|
  = 
  \Big| \big\{h \in {\cal Y}^{\cal X}:\ B_Y^m(h)=y \big\}\Big|
  \ .
  \end{equation*}
  
\subsection{Probability concepts}
 For the sake of completeness and the reader's convenience, we summarise here, albeit in a very
 compact way, all concepts from measure and probability theory that are used in the sequel.

  Let $(\Omega, {\cal F},P)$ be a probability space and $(S, \cal S)$ any measurable space. An $S$-valued 
  random variable $f$ is a measurable mapping $f\colon \Omega\rightarrow S$.    
A \emph{random function} is simply a random variable with values in the space of all functions 
between two sets $\cal X$ and $\cal Y$, the second one equipped with some $\sigma$-field; that is, 
we take $S={\cal Y}^{\cal X}$, and the natural choice for $\cal S$ is the 
\emph{product $\sigma$-field}, i.e.
the smallest $\sigma$-field that turns every projection ${\cal Y}^{\cal X}\rightarrow {\cal Y}$ into a 
measurable mapping. Random functions
are also called \emph{stochastic processes}, especially when $\cal X$ is an interval $I$ of the real line
and $\cal Y$ is the set of real numbers $\Reals$ endowed with the Borel $\sigma$-field. We assume in the rest of the paper that ${\cal X}=[0,1]$ and ${\cal Y}=\Reals$, 
so that we are 
dealing with random functions $f(\omega)\colon [0,1]\rightarrow \Reals$. 

The \emph{law} of a stochastic process is the image measure of $P$ through the mapping $f$. That means, it 
is the probability $\mu$ on $(S,\cal S)$ such that 
\begin{equation*}
  \mu(B)=P(f^{-1}(B))
  \ ,\quad 
  \forall B\in \cal S
  \ .
\end{equation*}
  Given a stochastic process $f$, the composition  $f(t):=\delta_t\circ f$ of $f$ with the Dirac delta at 
  $t\in[0,1]$
  is automatically  a real random 
  variable $f(t)\colon \Omega\rightarrow \Reals$, with respect to the Borel $\sigma$-field on $\Reals$.
  The random vectors of the form $(f(t_1),\dots, f(t_m))$, with $t_1,\dots,t_m\in [0,1]$, are the 
  finite-dimensional  
  projections of $f$. Their laws, the \emph{finite-dimensional distributions}, determine the law of the whole process.

  A stochastic process can be represented in different ways: As a function-valued random variable, as above,
  or as a family of real-valued random variables, $\{f(t),\ t\in[0,1]\}$, or as a mapping from the product
  $\Omega\times [0,1]$ into $\Reals$, defined in the obvious way: $(\omega, t)\mapsto f(t,\omega)$.
  A process is said to be \emph{measurable} if, in the last representation, it is a measurable mapping 
  $\Omega\times [0,1]\rightarrow \Reals$
  when $\Omega\times[0,1]$ is endowed with the product $\sigma$-field ${\cal F}\times{\cal B}([0,1])$, 
  where ${\cal B}([0,1])$ denotes the Borel $\sigma$-field of $[0,1]$.
  Stochastic processes mentioned in Auger--Teytaud \cite{MR2581081} are always 
  considered measurable. This is an important hypothesis in their results, and its role will be
  made clear in the present paper. A process is said to be of \emph{second-order} if all its variables
  are square-integrable, implying that they have finite expectation and variance.
  
  With some abuse of notation, we use the same symbol $f$ to denote several different related objects:
  $f$ is a function $\Omega\rightarrow\Reals^{[0,1]}$, or $\Omega\times[0,1]\rightarrow\Reals$; for
  every $\omega\in\Omega$, $f(\omega)$ is a function $[0,1]\rightarrow\Reals$; for all $t\in[0,1]$,
  $f(t)$ is a random variable $\Omega\rightarrow\Reals$; and finally, for all $t$ and $\omega$, the
  value $f(t,\omega)$ is a real number.
  
  We will also use occasionally the customary abbreviations a.s. for \emph{almost surely}
  (i.e. true with probability 1), and a.e. for \emph{almost everywhere} (i.e. true except a set of 
  measure zero with respect to Lebesgue measure).
  
\section{Main results}\label{sec:Main}
Recall, from the notations in Section \ref{sec:Prelim}, that $A^m_Y(f(\omega))$ is the random vector 
consisting of the 
images $(f(t_1),\dots,f(t_m))(\omega)\in\Reals^m$ produced by applying $m$ iterations of algorithm $A$
on the function $f(\omega)\in\Reals^{[0,1]}$.
\begin{dfn}\label{NFLrespC}
  Let $C$ be a performance measure, measurable on $\Reals^m$, for all $m$. 
  We say that a stochastic process $f=\{f(t),\ t\in[0,1]\}$ satisfies 
  the \emph{No-Free-Lunch property 
  with respect to  $C$} if for any two algorithms $A$ and $B$, and for all $m\in\Naturals$, 
  the random variables
\begin{equation*}
\omega\mapsto C\big(A^m_Y(f(\omega))\big)\quad \text{and}\quad \omega\mapsto C\big(B^m_Y(f(\omega))\big) 
\end{equation*}  
 have the same law. 
 \end{dfn}
 Intuitively, the NFL property states that the information about $C$ that 
 we get after having sampled $m$ points 
 is the same
 no matter which algorithm we use. In particular, blind search performs as well as any other algorithm.
\begin{dfn}
  We say that a stochastic process $f=\{f(t),\ t\in[0,1]\}$ satisfies the \emph{No-Free-Lunch property} 
  if it does so with respect to all possible performance measures $C$.
\end{dfn}
\begin{remark}\label{rem:same_law}
\emph{
  It is easily seen (see e.g. Auger and Teytaud \cite{MR2581081}, Lemma 2.3), that if $f$ satisfies the No-Free-Lunch property
  then the random variables 
\begin{equation*}
\omega\mapsto A^m_Y(f(\omega)) \quad \text{and}\quad \omega\mapsto B^m_Y(f(\omega))
\end{equation*}  
have the same law. Conversely, if the above random variables have the same law, then $f$ satisfies the No-Free-Lunch 
property for all performance measures $C$.  
One may say that NFL is a extremely strong form of stationarity. A \emph{stationary process} has invariant
laws under translations: The law of $(f(t_1),\dots, f(t_n))$ and 
$(f(t_1+h),\dots, f(t_m+h))$ are the same, for every $h$ and any dimension $m$, provided all indices belong
to the set where the process is defined, the interval $[0,1]$ in our case. It is clear that the NFL property
is much stronger.
}

\end{remark}

\begin{examples}
  We can readily show two examples of random functions enjoying the No-Free-Lunch property:
\begin{enumerate}
  \item 
  Consider a set of indices $I$ of arbitrary cardinality, and a family 
  of random variables $f_i\colon\Omega\rightarrow\Reals$, $i\in I$, 
  mutually independent and identically distributed, with any non-degenerate 
  probability law. (As a specific case, consider for instance 
  Bernoulli or Gaussian variables, and $I=\Reals^+$ as the set of indices).
  
  Such a stochastic process exists by the classical Kolmogorov Extension Theorem
  (see e.g. \cite{MR0435320}). All $n$-dimensional joint distributions
  are the direct product measure of the individual laws, and are therefore
  the same. The NFL property is then trivially satisfied.
  
  \item
  Consider any random variable $X$ and define a constant process $f(t)\equiv X$, for all 
  $t\in[0,1]$. The NFL property is also immediate to check.
\end{enumerate}
  These examples show that there exist NFL situations also in the continuum case, and 
  that the cardinality of $I$ alone cannot be the responsible of the lack of 
  No-Free-Lunch. 

\end{examples}

 It is certainly true that a continuous-time stochastic process with all variables mutually independent can hardly 
 be of any interest in modelling a real phenomenon (in sharp contrast with the discrete case). Notice for example, that
 in the common case of Gaussian variables, almost all sample paths (i.e., all functions in the set we are considering, 
 with probability 1) are unbounded from below and from above, which makes 
 pointless to search for or to approximate 
 the minimum value. As another example, if 
 the variables have the uniform law in an interval $[\alpha, \beta]$,
 then almost all sample paths are bounded, with infimum equal to $\alpha$ and supremum 
 equal to $\beta$,  although the probability that these values are attained is zero. 
 A very different problem is the case when $\alpha$ or $\beta$ are unknown and we try to estimate them
 by sampling $m$
 points from independent variables distributed uniformly in $[\alpha,\beta]$;
 this is indeed a statistical problem of a real practical interest.

 Such trivial NFL situations do not appear if we impose on $f$ the condition of being a measurable process. 
 This is the main result of Auger and Teytaud \cite{MR2581081}. 
 We reformulate it as the problem of finding
 a necessary condition for having NFL in a measurable process, and show that in this case we 
 are dealing essentially with a constant process. This possibility does not appear in 
 \cite{MR2581081},
 because of the hypothesis of existence of a so-called ``proper median'', that the 
 authors introduce in the definition of NFL, and that it looks somewhat artificial.
 We will not use this concept.
 We also point out that the argument in \cite{MR2581081} 
 is in our opinion not complete, since at some point in the proof of their Theorem 4.1
 there is a confusion between the underlying randomness of the process and the eventual
 randomness of the algorithm applied. 
 
 We start by showing that measurability and independence together collapses the process to 
 an essentially discrete-time stochastic process. 
 \begin{theorem}\label{thm:MesInd}
 Let $f=\{f(t),\ t\in[0,1]\}$ be a measurable stochastic process with 
 mutually independent
 random variables $f(t)$.
 Then, for almost all $t$ with respect to Lebesgue measure, the
 random variable $f(t)$ is constant with probability 1.
 
\end{theorem}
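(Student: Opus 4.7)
The plan is to reduce to a bounded, square-integrable setting via composition with a homeomorphism, and then exploit the interplay between independence and measurability through two applications of Fubini's theorem. Since $f$ need not possess any moments, I would work with $g(t,\omega) := \arctan(f(t,\omega))$: composition preserves joint measurability on $\Omega\times[0,1]$, preserves independence of the family $\{g(t)\}_{t\in[0,1]}$, and satisfies $|g|\le\pi/2$. Because $\arctan\colon\Reals\to(-\pi/2,\pi/2)$ is a bijection, it suffices to prove that $g(t)$ is $P$-a.s.\ constant for Lebesgue-a.e.\ $t\in[0,1]$.

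First, set $h(t):=E[g(t)]$, a bounded measurable function of $t$ by Fubini. For $0\le a<b\le 1$, boundedness and independence give
\begin{align*}
E\left[\left(\int_a^b g(t)\,dt\right)^2\right]
&= \int_a^b\!\!\int_a^b E[g(s)g(t)]\,ds\,dt \\
&= \int_a^b\!\!\int_a^b h(s)h(t)\,ds\,dt
= \left(\int_a^b h(t)\,dt\right)^2,
\end{align*}
where the first equality is Fubini and the second uses that $E[g(s)g(t)]=h(s)h(t)$ for $s\ne t$ together with the fact that the diagonal $\{s=t\}$ has two-dimensional Lebesgue measure zero. The right-hand side equals $(E[\int_a^b g(t)\,dt])^2$, so the variance of $\int_a^b g(t)\,dt$ vanishes and this integral equals some deterministic constant $F(a,b)$ almost surely.

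Intersecting the countable family of exceptional null sets over pairs $a<b$ in $\Rationals\cap[0,1]$ yields a single $P$-null set $N$ such that, for every $\omega\notin N$ and every rational $a<b$, $\int_a^b g(t,\omega)\,dt=F(a,b)$. Fix $\omega_0\notin N$; for any other $\omega\notin N$ the bounded measurable function $t\mapsto g(t,\omega)-g(t,\omega_0)$ has integral zero over every rational subinterval, hence over every subinterval by continuity of the integral, hence vanishes for Lebesgue-a.e.\ $t$. Consequently the measurable set
\begin{equation*}
E := \{(\omega,t)\in\Omega\times[0,1] : g(t,\omega)=g(t,\omega_0)\}
\end{equation*}
has $\omega$-sections of full Lebesgue measure for every $\omega\notin N$. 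Applying Fubini to $\ind_{E^c}$ gives $(P\otimes\lambda)(E^c)=0$, so for $\lambda$-a.e.\ $t$ the $t$-section $\{\omega:g(t,\omega)\ne g(t,\omega_0)\}$ is $P$-null; that is, $g(t)$ is a.s.\ constant for a.e.\ $t$. Inverting $\arctan$ yields the theorem.

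The main subtlety is keeping apart the two quantifications ``a.s.\ in $\omega$'' and ``a.e.\ in $t$'', which is precisely why two distinct applications of Fubini are needed: one to turn pointwise independence into the vanishing of the variance of a random integral, and another to swap the roles of $\omega$ and $t$ in the section statement. Both steps depend essentially on the joint measurability of $f$ on $\Omega\times[0,1]$; without this hypothesis the preceding examples in the excerpt show that the conclusion genuinely fails.
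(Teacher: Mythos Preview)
Your proof is correct and shares the paper's core idea: use Fubini and independence to show that certain path integrals of the process have zero variance, pass to a single $P$-null set via the rationals, and then read off that almost every $f(t)$ is degenerate. The main difference is in the reduction to a bounded process. The paper truncates by $\bar f_m(t)=f(t)\ind_{\{|f(t)|<m\}}$, centres it, applies the bounded case, and then lets $m\to\infty$; this requires a separate limiting argument to recover $f$ from the truncations. Your single composition with $\arctan$ achieves boundedness in one stroke, preserves independence and joint measurability, and the bijectivity of $\arctan$ makes the ``constant a.s.'' conclusion transfer back to $f$ immediately---so you avoid the limit step entirely. A second, minor difference is that the paper centres first and shows the \emph{second moment} of $\int_0^t$ vanishes, whereas you keep the mean $h(t)$ and show directly that the \emph{variance} of $\int_a^b$ vanishes; these are of course equivalent. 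Finally, you make explicit the second Fubini swap (from ``for a.s.\ $\omega$, a.e.\ $t$'' to ``for a.e.\ $t$, a.s.\ $\omega$''), which the paper leaves implicit in the line ``a.s., and for almost all $t$''. Overall your argument is a slight streamlining of the paper's.
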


\begin{proof}
We treat first the particular case in which the process is bounded and centred. Then the
result will be easily extended to the general case. 

\emph{First case:} Assume $\E[f(t)]=0$ and that there is a constant $m\in\Naturals$ such that 
for all $t$, $|f(t)|<m$. 

If $f\colon \Omega\times [0,1]\rightarrow \Reals$ is a measurable mapping, then the partial mappings 
$f(\omega)\colon [0,1]\rightarrow \Reals$ are also measurable, for $\omega\in\Omega$ almost surely. 
Since, moreover, all the sample paths are bounded, it makes sense to consider their Lebesgue integrals  
\begin{equation*}
g(t)=\int_0^t f(s)\,ds\ , \quad t\in[0,1]
\ .
\end{equation*}
From the properties of the integral, $g(t)$ is a process with continuous paths almost surely. 
We may leave it undefined for the exceptional set of probability zero, because this is not relevant.

The random variables $g(t)$ are also clearly bounded (e.g. by $m$ itself), and therefore the 
second moment $\E[g(t)^2]$ is finite. But we see that it is in fact equal to zero:
\begin{align*}
  \E[g(t)^2]
  &=
  \E\Big[\int_0^t f(s)\,ds\cdot\int_0^t f(r)\,dr\Big]
  =
  \E\Big[\int_0^t\int_0^t f(s)f(r)\,dsdr\Big]
  =
  \int_0^t\int_0^t \E[f(s)f(r)]\,dsdr
  =
  \\
  &=  
  \int_0^t\int_0^t\E[f(s)^2]\cdot\ind_{\{s=r\}}\,dsdr+\int_0^t\int_0^t \E[f(s)]\E[f(r)]\cdot\ind_{\{s\neq r\}}\,dsdr
  \ ,
\end{align*}
  where the interchange of integral and expectation is justified by the boundedness of all functions involved, 
  which allows to apply the Fubini theorem, and we have used the hypothesis 
  $\E[f(s)\cdot f(r)]=\E[f(s)]\cdot 
  \E[f(r)]$. 
  Now, the first integral is equal to zero because we are integrating over the line $\{s=r\}$, which has zero Lebesgue measure,
  and the second one is also zero because the variables are centred.
  
  The equality $\E[g(t)^2]=0$ implies that for all $\omega\in\Omega$ except maybe in a subset $N_t\subset\Omega$ 
of probability zero, one has $g(t,\omega)=0$. In particular, this is true for 
all $t\in\Rationals$ and, since $\Rationals$ is a countable set, we have that $N=\unio_{t\in\Rationals} N_t$  
has probability zero. 
By the continuity of the paths, we obtain that for $\omega\in\Omega-N$, $g(t,\omega)=0$ for all $t\in [0,1]$. 
The integral being zero for all $t$, the integrand is also zero except maybe on a set of 
Lebesgue measure equal to zero. 
We conclude that a.s., and for almost all $t\in[0,1]$, 
$f(t,\omega)=0$. 

\emph{General case:} \\ 
Let $m\in\Naturals$, and define:
$$\bar{f}_{m}(t):=f(t)\cdot\textbf{1}_{\{|f(t)|<m\}}$$
and
$$\hat{f}_{m}(t)=\bar{f}_{m}(t)-\E[\bar{f}_{m}(t)]\ .$$

The random variables $\hat f(t)$ are also mutually independent. Applying the particular case above 
we have that for all $m$, the law of
$\hat{f}_{m}(t)$ is a Dirac delta at zero, for $t\in[0,1]$ a.e., and therefore 
$\bar{f}_{m}(t)$ is constant $\omega$-a.s, $t$-a.e.

Now, let $N_{t,m}\subset \Omega$ be the set where $\bar f_{m}(t)\neq f(t)$. 
The random variable $f(t)$ is almost surely equal to a constant $a(t,m)$ on the set $\Omega-N_{t,m}$, 
which tends, as $m\to\infty$, to
a set $\Omega-N_\infty(t)$, whose probability is 1, given that $\lim_{m\to\infty} P\{|f(t)|\le m\}=1$. 
We obtain that the constant $a(t,m)$ cannot depend on $m$, and conclude that
for almost all $t\in[0,1]$, with respect to Lebesgue measure, the random variables
$f(t)$ are degenerated. 
\end{proof}
\par
\bigskip
In other words, Theorem \ref{thm:MesInd} states that, under the hypotheses of measurability and 
independence%
, randomness can only appear 
on a time set of zero Lebesgue measure. Hence, the process
can be considered, in a measure-theoretic sense, as a discrete time stochastic process.

For the remaining of the section, we assume that we deal with second-order processes. 
We will show, in Theorem \ref{thm:NFLisConst} below, that a measurable, second-order process satisfying the NFL property
is trivial: All their random variables are almost surely equal.

First, we state some preliminary results in the form of lemmas:
\begin{lemma}
\label{th1ITA}
Let $(\Omega,\mathcal{F},P)$ be a probability space and 
$f : \Omega \times [0,1] \rightarrow \mathbb{R}$ a measurable second-order stochastic process.
Then the NFL property can be satisfied only if the random variables $f(t)$ are identically 
distributed and the covariance $\Cov(f(t),f(s))$ 
is constant for all $t$, $s \in [0,1]$, $t \neq s$. 
\end{lemma}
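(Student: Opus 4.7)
My entire argument will rest on the equivalence recorded in Remark~\ref{rem:same_law}: under NFL, for every pair of algorithms $A,B$ and every $m\in\Naturals$, the $\Reals^m$-valued random vectors $\omega\mapsto A^m_Y(f(\omega))$ and $\omega\mapsto B^m_Y(f(\omega))$ share the same law. The strategy is to feed this equivalence the simplest possible algorithms, namely deterministic \emph{fixed-schedule} ones that query a prescribed sequence of points in $[0,1]$ irrespective of the observed values.

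For the identical-distributions part I would pick any $t,s\in[0,1]$ and take $A$ to be the algorithm that queries $t$ first, and $B$ the one that queries $s$ first. Then $A^1_Y(f(\omega))=f(t,\omega)$ and $B^1_Y(f(\omega))=f(s,\omega)$, so the equivalence with $m=1$ yields that $f(t)$ and $f(s)$ have the same law. For the constant-covariance part I would pick arbitrary pairs $t_1\neq t_2$ and $s_1\neq s_2$ in $[0,1]$ and consider the algorithm $A$ that queries $t_1$ and then $t_2$, against the algorithm $B$ that queries $s_1$ and then $s_2$; both are legitimate because no point is repeated. The equivalence with $m=2$ then forces $(f(t_1),f(t_2))$ and $(f(s_1),f(s_2))$ to share the same bivariate distribution, and the second-order hypothesis makes both covariances finite and hence equal. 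Taking these equalities across all admissible choices of pairs shows that $\Cov(f(t),f(s))$ takes a single value on $\{(t,s):t\neq s\}$, which is the assertion.

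\textbf{Main obstacle.} There is in truth very little to overcome: once Remark~\ref{rem:same_law} is in place the proof is essentially bookkeeping, the only thing to verify being that the chosen fixed-schedule algorithms conform to the definition in Section~\ref{sec:Prelim} (deterministic, depending only on the history, no point queried twice), which is immediate. It is worth flagging for the reader that measurability of the process is never invoked here, and that the second-order hypothesis plays only the modest role of ensuring that $\Cov(f(t),f(s))$ is a finite quantity. These hypotheses will only reveal their force in the subsequent step, where the weak conclusion of this lemma---identical marginals together with off-diagonal constant covariance---is to be upgraded to the trivialisation of $f$ asserted in Theorem~\ref{thm:NFLisConst}.
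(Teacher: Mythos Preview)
Your proposal is correct and follows essentially the same route as the paper: invoke Remark~\ref{rem:same_law}, then compare fixed-schedule deterministic algorithms at $m=1$ to get identical marginals and at $m=2$ to get equality of the two-dimensional laws, hence of covariances. Your additional remarks on which hypotheses are actually used (measurability is not, second-order only guarantees finiteness of the covariance) are accurate and go a bit beyond what the paper writes, but the argument itself is the same.
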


\begin{proof}  
If the NFL property is satisfied, then by Remark \ref{rem:same_law} the vectors $A_Y^m(f)$ and 
$B_Y^m(f)$ 
are identically distributed for any $m\in\Naturals$ and any pair of algorithms $A$
and $B$.

Take $m=1$. Given two values $t$ and $s$ in $[0,1]$, let $A$ be a deterministic algorithm that chooses 
$t$ as initial point and $B$ another
deterministic algorithm that chooses $s$ as initial point.
Then $f(t)$ and $f(s)$ are identically distributed.

Take now $m = 2$. 
Given two couples of different points  $(t_1,t_2)$ and $(s_1,s_2)$, let $A$ be a deterministic algorithm 
that choses $(t_1,t_2)$ as the first two points 
and $B$ be a deterministic algorithm that chooses $(s_1,s_2)$.
Then the random vectors $(f(t_1),f(t_2))$ and $(f(s_1),f(s_2))$ are identically distributed. 
This fact implies in particular that 
$$\Cov(f(t_1),f(t_2)) = \Cov(f(s_1),f(s_2)) \ .$$
\end{proof}

\begin{lemma}\label{lem:rho_bound}
Let $X_1, \dots, X_n$ ($n\ge 2$) be real square-integrable random variables with a common covariance
$\rho = \Cov[X_i,X_j]$ when $i\neq j$, and define $V:=\max_i \Var[X_i]$, the maximum of their variances.
Then $$\rho \ge - \frac{V}{n-1}$$  
\end{lemma}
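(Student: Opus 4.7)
The plan is to exploit the non-negativity of the variance of the sum $S := X_1 + \cdots + X_n$. Writing $S$ as a linear combination with all coefficients equal to $1$, the standard bilinear expansion gives
\begin{equation*}
\Var(S) = \sum_{i=1}^n \Var(X_i) + \sum_{i\neq j} \Cov(X_i,X_j) = \sum_{i=1}^n \Var(X_i) + n(n-1)\rho,
\end{equation*}
where the second equality uses the hypothesis that $\Cov(X_i,X_j)=\rho$ is the same for every pair $i\neq j$, and there are exactly $n(n-1)$ such ordered pairs. The square-integrability assumption ensures that every term involved is finite, so the expansion is legitimate.

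Next, I would use $\Var(S) \ge 0$ to obtain
\begin{equation*}
n(n-1)\rho \ge -\sum_{i=1}^n \Var(X_i).
\end{equation*}
Since $\Var(X_i) \le V$ for every $i$ by definition of $V$, the right-hand side is at least $-nV$, hence $n(n-1)\rho \ge -nV$. Dividing by $n(n-1)>0$ (which is legitimate because $n\ge 2$) yields
\begin{equation*}
\rho \ge -\frac{V}{n-1},
\end{equation*}
which is the claim.

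There is really no obstacle here: the only things to check are that the bilinear expansion is valid (guaranteed by square-integrability) and that $n(n-1)>0$ (guaranteed by $n\ge 2$). The lemma is essentially a one-line consequence of $\Var(X_1+\cdots+X_n)\ge 0$ together with the uniformity of the pairwise covariance.
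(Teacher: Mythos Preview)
Your proof is correct and follows exactly the same approach as the paper: expand $\Var\big(\sum_i X_i\big)\ge 0$, use the common covariance $\rho$ for the $n(n-1)$ off-diagonal terms, bound $\sum_i \Var(X_i)\le nV$, and divide by $n(n-1)$.
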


\begin{proof}
  We have 
\begin{equation*}
  0 
  \le 
  \Var\Big[\sum_{i=1}^n X_i\Big]  
  =
   \sum_{i=1}^n \Var [X_i] 
  +
  \sum_{\substack{i,j=1 \\ i\neq j}}^n \Cov[X_i, X_j]
  \le 
  Vn + n(n-1)\rho
\end{equation*}
  and the result follows at once. 
\end{proof}

The proof of the next result is immediate:
\begin{lemma}\label{red_process}
Let $f\colon \Omega \times[0,1] \rightarrow \mathbb{R}$ be
a second-order stochastic process such that the variables  
$f(t)$ are identically distributed, with finite common mean $\mu$ and positive common variance $\sigma^2$. 
Then $f$ satisfies the NFL property if and only if the same holds for $(f-\mu)/\sigma$.
\end{lemma}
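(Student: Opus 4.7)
The plan is to exploit the fact that $\phi(y) := (y-\mu)/\sigma$ is a measurable bijection of $\Reals$ onto itself, and to set up a natural correspondence between deterministic algorithms acting on $f$ and those acting on $g := (f-\mu)/\sigma$. By Remark \ref{rem:same_law}, a process $h$ enjoys the NFL property if and only if the law of the random vector $A^m_Y(h)$ is the same for every deterministic algorithm $A$ and every $m\in\Naturals$; this is the statement I will propagate back and forth between $f$ and $g$ using $\phi$.

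The main step is to associate to each deterministic algorithm $A$ a conjugate algorithm $A^\phi$, defined as follows: after observing a partial history $((x_1,y_1),\dots,(x_k,y_k))$ of its input function, $A^\phi$ first applies $\phi$ componentwise to the $y$-coordinates and then feeds the transformed history to the decision rule of $A$, adopting the point $x_{k+1}$ that $A$ would have chosen as its own next query. The assignment $A\mapsto A^\phi$ is a bijection on the class of deterministic algorithms, the inverse being obtained by the same construction with $\phi$ replaced by $\phi^{-1}(z)=\mu+\sigma z$.

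The key pathwise identity is
\begin{equation*}
  A^m_Y\bigl(g(\omega)\bigr) \;=\; \Phi_m\bigl(A^{\phi,m}_Y(f(\omega))\bigr),
  \qquad \text{where} \qquad
  \Phi_m(y_1,\dots,y_m) := \bigl(\phi(y_1),\dots,\phi(y_m)\bigr),
\end{equation*}
established by an immediate induction on $m$: since $g(\omega,x)=\phi(f(\omega,x))$, both algorithms see exactly the same transformed history at each step, and therefore produce the same sequence of query points. Consequently, if $f$ satisfies NFL, then for any pair of algorithms $A,B$ applied to $g$ the vectors $A^{\phi,m}_Y(f)$ and $B^{\phi,m}_Y(f)$ share a common law on $\Reals^m$, and pushing this equality forward through the measurable map $\Phi_m$ yields that $A^m_Y(g)$ and $B^m_Y(g)$ are equal in law. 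The converse implication is obtained by running the same argument with $\phi^{-1}$ in place of $\phi$. There is no real obstacle here; the only care needed is the bookkeeping in the inductive step, which is routine, and this is why the lemma is labelled as immediate.
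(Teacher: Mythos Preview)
Your argument is correct and is precisely the natural way to spell out what the paper merely flags as immediate (the paper gives no proof beyond the word ``immediate''). The conjugation $A\mapsto A^\phi$ and the pathwise identity $A^m_Y(g)=\Phi_m\bigl((A^\phi)^m_Y(f)\bigr)$ are exactly the right bookkeeping, and invoking Remark~\ref{rem:same_law} to reduce NFL to equality of the laws of $A^m_Y$ is the intended shortcut.
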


By Lemmas \ref{th1ITA}, \ref{lem:rho_bound} and \ref{red_process}, 
we can restrict the search of a second-order measurable stochastic process 
satisfying the NFL property to the case when 
the variables $f(t)$ are identically distributed, with zero mean, unit variance and such that 
for some $\rho\ge 0$, $\Cov[f(t),f(s)] = \rho$ for any pair $t \neq s$.

In the next theorem we use Fourier analysis, following quite closely some arguments that can be 
found in Crum \cite{MR0083534}, to prove our main result:
\begin{theorem}\label{thm:NFLisConst}
  Let $\{f(t,\omega):\ t\in[0,1],\  \omega\in\Omega\}$ be a measurable, second-order
  stochastic process, with $\E[f(t)]=0$ and $\E[f(t)^2]=1$ for all $t\in[0,1]$, 
  satisfying the NFL property, and defined in some
  probability space $(\Omega, {\cal F}, P)$.

  Then, the process is constant, in the sense that there exists a random variable $X\colon\Omega\rightarrow\Reals$
  such that $P\{\omega\in\Omega:\ f(t,\omega)=X(\omega)\}=1$, $\forall t\in[0,1]$.

\end{theorem}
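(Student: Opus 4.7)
The plan is to reduce, via the three preceding lemmas, to the case of a covariance function that is constant off the diagonal, and then to use Fourier analysis on the paths of $f$ to force that constant to equal $1$. By Lemma \ref{th1ITA} the variables $f(t)$ are identically distributed and $\Cov(f(t),f(s))=\rho$ for every pair $t\neq s$; applying Lemma \ref{lem:rho_bound} at $n$ distinct points and letting $n\to\infty$ gives $\rho\geq 0$; and Lemma \ref{red_process} lets us assume $\E[f(t)]=0$ and $\Var[f(t)]=1$. The covariance function then reads
\[
R(t,s)=\ind_{\{t=s\}}+\rho\,\ind_{\{t\neq s\}},
\]
and the problem reduces to proving $\rho=1$.

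By measurability of $f$ and Fubini, $\E\big[\int_0^1 f(t)^2\,dt\big]=\int_0^1\E[f(t)^2]\,dt=1<\infty$, so almost surely the path $f(\cdot,\omega)$ lies in $L^2([0,1])$. I would then introduce the Fourier coefficients
\[
\hat f(k,\omega):=\int_0^1 f(t,\omega)\,e^{-2\pi i k t}\,dt,\qquad k\in\Integers,
\]
which are well-defined random variables. Expanding $|\hat f(k)|^2$ as a double integral, swapping expectation and integration by Fubini (justified by Cauchy--Schwarz and the uniform bound on second moments), and using that the diagonal of $[0,1]^2$ has zero Lebesgue measure, one finds
\[
\E\big[|\hat f(k)|^2\big]=\int_0^1\!\!\int_0^1 R(t,s)\,e^{-2\pi i k(t-s)}\,dt\,ds=\rho\,\bigg|\int_0^1 e^{-2\pi i k t}\,dt\bigg|^2,
\]
which equals $\rho$ for $k=0$ and $0$ for every $k\neq 0$. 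Thus each $\hat f(k)$ with $k\neq 0$ vanishes almost surely, and taking the countable intersection over $k\in\Integers\setminus\{0\}$, almost surely all nonzero Fourier coefficients of the path $f(\cdot,\omega)$ vanish.

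Parseval's identity, applied pathwise, then gives almost surely $\int_0^1 f(t,\omega)^2\,dt=|\hat f(0,\omega)|^2$. Taking expectations yields $1=\rho$, hence $\rho=1$. Consequently, for every $t\neq s$,
\[
\E\big[(f(t)-f(s))^2\big]=2-2\rho=0,
\]
so $f(t)=f(s)$ almost surely; setting $X:=f(0)$ one obtains $P\{f(t,\omega)=X(\omega)\}=1$ for every $t\in[0,1]$, which is the claim.

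The main delicate point is the Fourier moment computation: although $R(t,t)=1\neq\rho$, the covariance effectively equals $\rho$ almost everywhere on $[0,1]^2$, killing every nonzero Fourier coefficient in mean square. This rests on the joint measurability of $f$, so that Fubini applies and the Lebesgue-null diagonal can be ignored; this is precisely the hypothesis that rules out the product-type counterexamples mentioned earlier in the section. The remaining ingredients---Fubini to obtain $L^2$-paths, Parseval applied path by path, and the concluding mean-square argument---are routine.
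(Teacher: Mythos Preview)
Your proof is correct and, while sharing the Fourier-analytic spirit of the paper's argument, it takes a genuinely different and more economical route. The paper first extends the process periodically to all of $\Reals$, weights the paths by a Gaussian $e^{-s^2}$ to place them in $L^1\cap L^2$, and then uses the continuous Fourier transform together with Plancherel and dominated convergence to show that $t\mapsto \E[G(t,\omega)]$ is continuous at $0$; since this quantity equals $\sqrt{2\pi}\,(1-e^{-t^2/2}K(t))$, one deduces $\lim_{t\to 0}K(t)=1$ and hence $\rho=1$. Your argument stays on $[0,1]$, uses Fourier \emph{series}, and reaches $\rho=1$ by a single moment comparison: the vanishing of $\E|\hat f(k)|^2$ for $k\neq 0$ plus pathwise Parseval gives $1=\E\int_0^1 f^2=\E|\hat f(0)|^2=\rho$ directly, with no extension, no weighting, and no limiting step. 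This is shorter and avoids the bookkeeping of the periodic extension and the two applications of dominated convergence. What the paper's approach buys is a bit more: following Crum it effectively proves that any measurable second-order process has $\E[f(t+s)f(s)]$ continuous in $t$ at $0$, a statement of independent interest, whereas your argument exploits the very special form of the covariance (constant off the diagonal) from the outset.
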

\begin{proof}
We are going first to extend the process from $[0,1]$ to the whole real line, in order to apply
Fourier transform techniques comfortably. Define:
\begin{equation*}
f(k+t):=
\begin{cases}
  f(t),\ \text{if $t\in (0,1]$, $k=1,2,\dots$} 
  \\
  f(t),\ \text{if $t\in [0,1)$, $k=-1,-2,\dots$} 
\end{cases}
\end{equation*}

From the previous lemmas, we know that the covariance function $K(t):=\E[f(t+s)f(s)]$ of
the extended process is equal to some $\rho$ ($0\le\rho\le 1$) for 
$t$ in an interval $(-\varepsilon,\varepsilon)$, and all $s\in\Reals$, except for $t=0$, in which is equal to 1. 
Our purpose is to see that in fact $\rho$ must be equal to 1, from which the conclusion will be easily 
drawn.

  It can be readily seen that the extended process is also measurable, using that a set 
  $A\in{\cal B}(I)\otimes{\cal F}$, where $I$ is any interval in $\Reals$, 
  is also ${\cal B}(\Reals)\otimes{\cal F}$-measurable when considered as
  a subset of $\Reals\times\Omega$.
  This implies that, for almost all $\omega$, 
  $t\mapsto f(t,\omega)$ is 
  a Borel measurable function, that it makes sense to consider the integrals
\begin{equation*}
  \int_\Reals e^{-2s^2} f(s)^2\, ds
  \ ,
\end{equation*}
  and that they are measurable functions $\Omega\rightarrow\Reals$. 
  Taking expectation and applying Fubini's theorem, 
\begin{equation*}
  \E\Big[\int_\Reals e^{-2s^2} f(s)^2\, ds\Big]
  =
  \int_\Reals e^{-2s^2} \E[f(s)^2]\, ds
  =
  \int_\Reals e^{-2s^2} \, ds 
  <
  \infty
  \ .
\end{equation*}
  This means that, $\omega$-a.s., $s\mapsto e^{-s^2}f(s)$ belongs to $L^2(\Reals)$.
  It also belongs to $L^1(\Reals)$, $\omega$-a.s:
\begin{equation*}
  \E\Big[\int_\Reals e^{-s^2} |f(s)|\, ds\Big]
  =
  \int_\Reals e^{-s^2} \E[|f(s)|]\, ds
  \le
  \int_\Reals e^{-s^2} \E[f(s)^2]^{1/2}\, ds 
  =
  \int_\Reals e^{-s^2} \, ds 
  <
  \infty
  \ .
\end{equation*}
  For such $\omega$, consider the function 
\begin{equation*}
  g(s):= e^{-s^2}f(s)-e^{-(s+t)^2} f(s+t)
  \ .
\end{equation*}

Since $g\in L^1(\Reals)$, we may take its Fourier transform
\begin{equation*}
  \hat g(\xi)
  =
  \int_\Reals g(s)\cdot e^{-2\pi is\xi}\, ds
\end{equation*}
  which can be written as
\begin{equation*}
  \int_\Reals e^{-s^2}f(s)\cdot e^{-2\pi is\xi}\, ds
  -
  \int_\Reals e^{-s^2}f(s)\cdot e^{-2\pi i(s-t)\xi}\, ds
  \ ,
\end{equation*}
or 
\begin{equation*}
  (1-e^{-2\pi i t\xi})\cdot \hat F(\xi)
  \ ,
\end{equation*}
  where $\hat F$ is the Fourier transform of $s\mapsto e^{-s^2}f(s)$.

  Since $g\in L^2(\Reals)$ also, by Plancherel's Theorem,
\begin{equation*}
  G(t,\omega)
  :=
  \int_\Reals g(s)^2\,ds
  =
  \int_\Reals |\hat g(\xi)|^2\,d\xi
  =
  \int_\Reals |1-e^{-2\pi i t\xi}|^2 \cdot |\hat F(\xi)|^2\,d\xi
  \ .
\end{equation*}
  The integrand tends to zero as $t\to 0$, and it is dominated by
  $4\cdot |\hat F(\xi)|^2\in L^1(\Reals)$. Therefore, 
  $\lim_{t\to 0} G(t,\omega)=0$, a.s.
  
Moreover,
\begin{align*}
  G(t,\omega)
  &=
  \int_\Reals \big|e^{-s^2}f(s)-e^{-(s+t)^2} f(s+t)\big|^2\, ds
  \\
  &\le
  2 \Big[\int_\Reals e^{-2s^2}f(s)^2\, ds + \int_\Reals e^{-2(s+t)^2}f(s+t)^2\, ds\Big]
  = 
  4\int_\Reals e^{-2s^2}f(s)^2\, ds 
  \ ,
\end{align*}
  that belongs to $L^1(\Omega)$, as we have seen before. By the Dominated 
  Convergence Theorem again,
\begin{equation*}
  \lim_{t\to 0} \E[G(t,\omega)] = 0 
  \ .
\end{equation*}
  On the other hand, 
\begin{align*}
  G(t,\omega)
  &=
  \int_\Reals e^{-2s^2}f(s)^2 \, ds
  +
  \int_\Reals e^{-2(s+t)^2}f(s+t)^2 \, ds
  -
  2\int_\Reals e^{-s^2-(s+t)^2}f(s)f(s+t) \, ds
  \\
  &=
  2\int_\Reals e^{-2s^2}f(s)^2 \, ds
  -
  2\int_\Reals e^{-s^2-(s+t)^2}f(s)f(s+t) \, ds  
  \ .
\end{align*}
  The expectation of the first term is equal to 
\begin{equation*}
  2\int_\Reals e^{-2s^2}\, ds = \sqrt{2\pi}
  \ .
\end{equation*}
  For the second, it yields
\begin{equation*}
  2\int_\Reals e^{-s^2-(s+t)^2} K(t)\, ds = \sqrt{2\pi}e^{-t^2/2} K(t)
\end{equation*}
  (the interchange of integral and expectation is justified here by
  checking first the integrability).
  
  We get 
\begin{equation*}
  \E[G(t,\omega)]=\sqrt{2\pi} \big(1-e^{-t^2/2}K(t)\big)
  \ .
\end{equation*}
  Hence
\begin{equation*}
  0=\lim_{t\to 0} \E[G(t,\omega)]
  =
  \lim_{t\to 0} \sqrt{2\pi}\big(1-e^{-t^2/2}K(t)\big)
  \ ,
\end{equation*}
 which implies that $\lim_{t\to 0} K(t) = 1$, and we conclude that
 $\rho = 1$, as we wanted to see.
 
 Finally, since $\Cov[f(t),f(s)]=1$, we have $f(t)=\alpha f(s)+\beta$, for some $\alpha,\beta$.
  But the variables are centred, and this implies $\beta=0$, whereas the unit variances
  yield $|\alpha|=1$. The negative value of $\alpha$ is impossible because the covariance is nonnegative.
  Hence $f(t)=f(s)$, almost surely, for all $t$ and $s$, and the proof is complete. 
\end{proof}
\begin{corollary}
  In view of Lemma \ref{red_process}, the conclusion of Theorem \ref{thm:NFLisConst} is true without the 
  hypotheses of null expectation and unit variance. 
\end{corollary}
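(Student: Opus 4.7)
The plan is to reduce the general case to the one already proved in Theorem \ref{thm:NFLisConst} by an affine rescaling, exactly as Lemma \ref{red_process} suggests. First I would invoke Lemma \ref{th1ITA}: since $f$ satisfies the NFL property and is second-order, the variables $f(t)$ are identically distributed, so they share a common finite mean $\mu:=\E[f(t)]$ and a common variance $\sigma^2:=\Var[f(t)]\in[0,\infty)$, independent of $t$. This single fact is what allows the normalisation step to make sense.

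Next I would split into two cases according to whether $\sigma^2$ vanishes. If $\sigma^2=0$, then each $f(t)$ is almost surely equal to the deterministic constant $\mu$, so the choice $X(\omega):=\mu$ gives $P\{f(t,\omega)=X(\omega)\}=1$ for every $t$, and we are done directly. If $\sigma^2>0$, define the normalised process $\tilde f(t,\omega):=(f(t,\omega)-\mu)/\sigma$. As an affine function of a measurable second-order process, $\tilde f$ is itself measurable and second-order, and it satisfies $\E[\tilde f(t)]=0$ and $\E[\tilde f(t)^2]=1$ for all $t$. By Lemma \ref{red_process}, $\tilde f$ inherits the NFL property from $f$. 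Theorem \ref{thm:NFLisConst} then applies to $\tilde f$ and yields a random variable $\tilde X\colon\Omega\to\Reals$ with $P\{\tilde f(t)=\tilde X\}=1$ for every $t\in[0,1]$. Setting $X:=\sigma\tilde X+\mu$ gives the desired conclusion $P\{f(t,\omega)=X(\omega)\}=1$ for all $t$.

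There is essentially no obstacle: the proof is a transparent rescaling, since measurability, the second-order property, and the NFL property are all preserved under affine transformations of the process (the last of these being precisely the content of Lemma \ref{red_process}). The only point that deserves explicit mention is the degenerate case $\sigma^2=0$, which is not covered by Lemma \ref{red_process} (it implicitly assumes $\sigma^2>0$ so that division makes sense) but which yields the conclusion immediately and even in a stronger, deterministic form.
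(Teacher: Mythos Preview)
Your proposal is correct and is exactly the argument the paper has in mind: the corollary is stated without proof, merely pointing to Lemma \ref{red_process}, and you have simply spelled out that reduction in full. Your explicit treatment of the degenerate case $\sigma^2=0$, which falls outside the scope of Lemma \ref{red_process}, is a welcome addition that the paper leaves implicit.
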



  Notice that the conclusion of the previous theorem and corollary does not mean that almost all sample
  paths are constant, because the null set $N_t$ where the equality $f(t)=X$ fails depends on $t$. 
  However, in an optimisation setting it is natural to specify a regularity assumption on the 
  functions, 
  besides the probabilistic model. For example, the continuity of the paths 
  (or simply the right or left continuity)
  automatically yields that the union $\cup_{t\in[0,1]}N_t$ has probability zero, and in that 
  case one may say that the process is constant in the sense that, except on a set $N\subset\Omega$ of probability
  zero, all paths $f(t)$ are constant.

Summarizing the present section:
\begin{itemize}
  \item 
Without measurability assumptions, we showed two examples of 
NFL property in the continuum: The case in which all variables are independent, identically
distributed, and the case where the process is constant: $f(t)= X$ a.s, 
$\forall t$, for some random variable $X$. Both are unimportant from 
the optimisation practitioner's point
of view, and both are ruled out in the mentioned paper by Auger and Teytaud, 
by imposing the measurability and the ``proper median'' hypotheses, respectively.

  \item 
We have shown that measurability and independence together lead to a ``discrete time'' process,
and that measurability and NFL (for second-order processes)
imply that the process is constant. With the three conditions 
together, or simply measurability, independence and stationarity, one gets easily that  
each variable of the process must be almost surely equal to some constant $k$, the
same for all of them: $P\{f(t)=k\}=1$, $\forall t$.
\end{itemize}

\section{Conclusion and open questions}\label{sec:Conc}
It is frequently argued that in realistic scenarios the hypotheses of the NFL theorems are always violated,
already in the genuinely discrete cases. We have shown that in the continuum the necessary conditions for NFL are even
stronger and far too restrictive to be found in practice. 

This means that in every practical situation 
there must be some information on the objective function that permits, in principle, to
choose algorithms that perform better than pure blind search.
We believe that the usefulness of the (no)-NFL statements is precisely on the theoretical side, to highlight that
any proposal of a search algorithm, supported by a benchmark of functions in which it behaves well, should 
be accompanied by a study of the benchmark common features that help that algorithm beat 
the others.
In other words, as has been emphatically pointed out in a recent expository article
\cite{Serafino}: <<It is clear now that for the practitioner
the correct question is not \emph{which algorithm I have
to use} but first of all \emph{what is the geometry of
the objective function}>>.

Before paper \cite{MR2581081}, there was, to our knowledge, no special interest in investigating the 
existence
of NFL theorems in the continuum. The typical argument was that
only the discrete case is important in practice, since the 
computations are always made in finite precision. This is no longer true, since arbitrary 
precision computations are commonplace today, meaning that the underlying computational
model is at least the rational number system $\Rationals$.

There are still two questions that deserve further study concerning NFL theorems, both in the discrete
and in the continuum settings:

\begin{itemize}
  \item 
The first one is the consideration of noisy functions, that means, black-box 
functions that may answer differently when asked twice for the value at the same point $t$. This is 
not uncommon in practice, since the computation of the objective value at a feasible point may involve
itself some randomness or the heuristic solution of another optimisation problem. In that case, the
algorithms to consider should be allowed to sample more than once the same point.

  \item 
More importantly, the second question refers to the concept of No-Free-Lunch itself. As we have seen, 
the NFL property is so strong that 
constant processes are the only measurable processes that qualify.
But if we are just concerned with minimizing a function, the relevant performance measure 
is $C(y_1,\dots,y_n)=\min\{y_1,\dots,y_n\}$, or perhaps some related function. 
Recalling Definition \ref{NFLrespC} applied to this measure, it is easy to see that NFL with respect to
$C$ is not sufficient to conclude that the laws of $\omega\mapsto A^m_Y(f(\omega))$ and 
$\omega\mapsto B^m_Y(f(\omega))$ have to be the same,
and then our Theorem \ref{thm:NFLisConst} need not be true. We believe that this point deserves 
further investigation.
\end{itemize}

\section{Acknowledgements}
  This work has been supported by grants numbers MTM2011-29064-C03-01  
  from the Ministry of Economy and Competitiveness of Spain; 
  UNAB10-4E-378, co-funded by the European Regional Development Fund (ERDF); 
  and 60A01-8451 from the University of Padova.
\bibliographystyle{plain}
\bibliography{Alabert-Berti-Caballero-Ferrante2.1}   

\begin{thebibliography}{1}

\bibitem{MR0435320}
Robert~B. Ash.
\newblock {\em Real analysis and probability}.
\newblock Academic Press, New York-London, 1972.
\newblock Probability and Mathematical Statistics, No. 11.

\bibitem{MR2581081}
Anne Auger and Olivier Teytaud.
\newblock Continuous lunches are free plus the design of optimal optimization
  algorithms.
\newblock {\em Algorithmica}, 57(1):121--146, 2010.

\bibitem{MR0083534}
M.~M. Crum.
\newblock On positive-definite functions.
\newblock {\em Proc. London Math. Soc. (3)}, 6:548--560, 1956.

\bibitem{1079.90111}
Christian Igel and Marc Toussaint.
\newblock {A no-free-lunch theorem for non-uniform distributions of target
  functions.}
\newblock {\em J. Math. Model. Algorithms}, 3(4):313--322, 2004.

\bibitem{Schumacher01theno}
C.~Schumacher, M.~D. Vose, and L.~D. Whitley.
\newblock The no free lunch and problem description length.
\newblock In {\em Proceedings of the Genetic and Evolutionary Computation
  Conference (GECCO-2001}, pages 565--570. Morgan Kaufmann, 2001.

\bibitem{Serafino}
Loris Serafino.
\newblock Derivatives: What does the no free lunch theorem actually say?
\newblock {\em Notices of the AMS}, 61(7):750--755, 2014.

\bibitem{Wolpert97nofree}
David~H. Wolpert and William~G. Macready.
\newblock No free lunch theorems for optimization.
\newblock {\em IEEE TRANSACTIONS ON EVOLUTIONARY COMPUTATION}, 1(1):67--82,
  1997.

\end{thebibliography}

\end{document}